\newtheorem{proposition}{Proposition}[section]
\newtheorem{theorem}[proposition]{Theorem}
\newtheorem{lemma}[proposition]{Lemma}
\theoremstyle{definition}
\theoremstyle{remark}
\newtheorem{remark}{Remark}[section]
\numberwithin{equation}{section}
\newcommand{\abs}[1]{\lvert#1\rvert}
\newcommand{\norm}[1]{\lVert#1\rVert}
\newcommand{\R}{\mathbb{R}}
\newcommand{\N}{\mathbb{N}}
\newcommand{\dif}{\,\mathrm{d}}
\title[Sobolev--Morrey--Campanato interpolation inequalities]{Interpolation inequalities between Sobolev and Morrey--Campanato spaces:\\
A common gateway to concentration-compactness and Gagliardo-Nirenberg interpolation inequalities}
\author{Jean Van Schaftingen}
\address{Universit\'e catholique de Louvain\\
Institut de Recherche en Math\'ematique et Physique (IRMP)\\
Chemin du Cyclotron 2 bte L7.01.01\\
1348 Louvain-la-Neuve\\
Belgium}
\email{Jean.VanSchaftingen@uclouvain.be}
\date{\today}
\keywords{Sobolev space; Morrey space; Campanato space; interpolation inequality; functions of bounded mean oscillation; H\"older continuous functions; Zygmund class; higher-order derivative; fractional Sobolev space; Besov space; concentration-compactness; improved Sobolev embedding.}
\subjclass[2010]{46E35 (26D15, 35A23)}
\begin{document}

\begin{abstract}
We prove interpolation estimates between Morrey--Campanato spaces and Sobolev spaces.
These estimates give in particular concentration-compactness inequalities in the translation-invariant and in the translation- and dilation-invariant case.
They also give in particular interpolation estimates between Sobolev spaces and functions of bounded mean oscillation.
The proofs rely on Sobolev integral representation formulae and maximal function theory.
Fractional Sobolev spaces are also covered.
\end{abstract}

\maketitle

\section{Introduction}

The subcritical Sobolev embedding states that if \(1 \le p \le q\) and \(\frac{1}{q} > \frac{1}{p} - \frac{1}{N}\), then for every \(u\) in the Sobolev space \(W^{1, p} (\R^N)\), \(u \in L^q (\R^N)\) and 
\begin{equation}
\label{ineqSobolev}
  \Bigl(\int_{\R^N} \abs{u}^q \Bigr)^\frac{p}{q} \le C \int_{\R^N} \abs{D u}^p + \abs{u}^p.
\end{equation}
Because the norms in \(W^{1, p} (\R^N)\) and \(L^q (\R^N)\) are \emph{invariant under translation}, this embedding is not compact, that is, bounded sets in \(W^{1, p} (\R^N)\) need not be precompact in \(L^q (\R^N)\).

This noncompactness is an obstacle to prove that the optimal constant in the estimate \eqref{ineqSobolev} is achieved.
One of the key observations in the concentration-compactness method of P.-L.\thinspace Lions which allows to overcome this problem \cite{Lions1984CC2} is that the elements of any  bounded sequence that does not converge to \(0\) in \(L^q (\R^N)\) can be translated in space so that the sequence of translations does not converge to \(0\) in \(L^q_{\mathrm{loc}} (\R^N)\). This fact can be deduced from the inequality  \citelist{\cite{Lions1984CC2}*{lemma I.1}\cite{Willem1996}*{lemma 1.21}\cite{MorozVanSchaftingen2013Ground}*{lemma 2.3}}:
\begin{equation}
\label{ineqLions}
  \int_{\R^N} \abs{u}^q \le C \Bigl(\sup_{x \in \R^N} \int_{B_1 (x)} \abs{u}^q\Bigr)^{1 - \frac{p}{q}} \int_{\R^N} \abs{D u}^p + \abs{u}^p.
\end{equation}

When \(p \in (1, N)\) and \(q = \frac{N p}{N - p}\), the limiting inequality for \eqref{ineqSobolev} is the critical Sobolev inequality
\begin{equation}
\label{ineqSobolevCritical}
  \Bigl(\int_{\R^N} \abs{u}^\frac{N p}{N - p} \Bigr)^{1 - \frac{p}{N}} \le C \int_{\R^N} \abs{D u}^p.
\end{equation}
This inequality is now \emph{invariant under both translations and dilations}. In particular, there are bounded sequences that do not converge to \(0\) in \(L^{N p / (N - p)} (\R^N)\) and every translation of which also converges to \(0\) in \(L^q_{\mathrm{loc}} (\R^N)\). However, for every bounded sequence \((u_n)_{n \in \N}\) in \(W^{1, p} (\R^N)\), that does not converge to \(0\) in \(L^{N p / (N - p)} (\R^N)\), there exist sequences \((x_n)_{n \in \N}\) in \(\R^N\) and \((r_n)_{n \in \N}\) in \((0, \infty)\) such that if 
\[
  v_n (y) = r_n^{(N - p)/p} u_n (x_n + r_n y),
\]
then the sequence \((v_n)_{n \in \N}\) does not converge to \(0\) in \(L^p_{\mathrm{loc}} (\R^N)\) \cite{Lions1985CC1}.
This follows from the inequality for every \(u \in W^{1, p} (\R^N)\)
\begin{equation}
\label{ineqLionsDilation}
 \int_{\R^N} \abs{u}^\frac{N p}{N - p} \le C \Biggl(\sup_{\substack{x \in \R^N\\ r > 0}} \frac{1}{r^p} \int_{B_r (x)} \abs{u}^p \Biggr)^\frac{p}{N - p} \int_{\R^N} \abs{D u}^p,
\end{equation}
which follows by H\"older's inequality from the interpolation estimate \cite{PatalucciPisante}*{theorem 1.2}
\begin{equation}
\label{ineqLionsDilationMorrey}
 \int_{\R^N} \abs{u}^\frac{N p}{N - p} \le C \norm{u}_{\mathcal{M}^{1, (N - p)/p}(\R^N)}^{p^2/(N - p)} \int_{\R^N} \abs{D u}^p, 
\end{equation}
where the Morrey norm is defined by 
\[
  \norm{u}_{\mathcal{M}^{q, \lambda}(\R^N)}
  = \sup_{\substack{x \in \R^N\\ r > 0}}  r^\lambda \Bigl( \fint_{B_r (x)} \abs{u}^q\Bigr)^\frac{1}{q}.
\]

The inequalities \eqref{ineqLions} and \eqref{ineqLionsDilation} seem at first hand quite different: the first is translation-invariant whereas the second is dilation- and translation-invariant. A first question that we address in this paper is to determine the relationship between the inequalities \eqref{ineqLions} and \eqref{ineqLionsDilation}. 
We answer this question by proving a family of inequalities of which both \eqref{ineqLions} and \eqref{ineqLionsDilation} are direct consequences:
if \(q > p > 1\) and if \(\lambda \in [0, p/(q - p))\), then for every \(u \in W^{1, p} (\R^N) \cap \mathcal{M}^{1, \lambda}_\rho(\R^N)\),
\begin{equation}
\label{ineqParticularInterpolation}
  \int_{\R^N} \abs{u}^q \le C\bigl(\rho^{-\lambda} \norm{u}_{\mathcal{M}^{1, \lambda}_\rho(\R^N)}\bigr)^{q - p} \int_{\R^N} \rho^{p} \abs{D u}^p + \abs{u}^p.
\end{equation}
where the localized Morrey norm is defined as \cite{TransiricoTroisiVitolo1995} (see also \citelist{\cite{CanaleDiGironimoVitolo1998}\cite{CasoDAmbrosioMonsurro2010}})
\[ 
\norm{u}_{\mathcal{M}^{q, \lambda}_\rho(\R^N)}
  = \sup_{\substack{x \in \R^N\\ r \in (0, \rho)}} r^\lambda \Bigl( \fint_{B_r (x)} \abs{u}^q\Bigr)^\frac{1}{q};
\]
the estimate \eqref{ineqLions} follows by the classical H\"older inequality from \eqref{ineqParticularInterpolation} with \(\lambda = \frac{N}{q}\) and \(\rho = 1\), whereas \eqref{ineqLionsDilationMorrey} is obtained by letting \(\rho \to \infty\) with \(\lambda = \frac{N - p}{p}\).

Our proof of \eqref{ineqParticularInterpolation} is based on pointwise integral estimates of a function and the maximal function theorem. It covers higher-order derivatives (theorem~\ref{theoremWkpfunction}) and fractional derivatives (theorem~\ref{theoremWspfunction}).
Our proof also provides an independent proof of the classical Sobolev and Gagliardo--Nirenberg inequalities.

Finally, we would like to mention that the statements of theorems~\ref{theoremWkpfunction} and \ref{theoremWspfunction} allow to prove an interpolation inequality between Sobolev spaces and functions of bounded mean oscillation: assuming that \(s > \ell \in \N\), if \(s \not \in \N\),  and \(p \ge 1\), then for every \(u \in W^{s, p} (\R^N) \cap \mathrm{BMO} (\R^N)\), one has \(u \in W^{\ell, \frac{sp}{\ell}} (\R^N)\) and 
\begin{equation}
  \norm{D^\ell u}_{L^{s p/\ell} (\R^N)} \le C \abs{u}_{\mathrm{BMO} (\R^N)}^{1 - \frac{s}{\ell}} \abs{u}_{W^{s, p}(\R^N)}^\frac{s}{\ell}.
\end{equation}
and if \(p > 1\) and \(s \in \N\), then for every \(u \in W^{s, p} (\R^N) \cap \mathrm{BMO} (\R^N)\), one has \(D u \in L^{\frac{sp}{\ell}} (\R^N)\) and 
\begin{equation}
  \norm{D^\ell u}_{L^{s p/\ell} (\R^N)} \le C \abs{u}_{\mathrm{BMO} (\R^N)}^{1 - \frac{s}{\ell}} \norm{D^s u}_{L^p (\R^N)}^\frac{s}{\ell}.
\end{equation}
These inequalities were known when \(s \in \N\) or \(p = 2\) \citelist{\cite{Strzelecki2006}\cite{KozonoWadade2008}\cite{MeyerRiviere2003}}.

\section{Statement of the result}

In order to state our results we recall the definition of the Campanato semi-norm\footnote{We warn the reader of the variety of conventions for the parameters in the definition of the Campanato seminorm.} \citelist{\cite{Campanato1964}\cite{RafeiroSamkoSamko2013}}
\[
  \abs{u}_{\mathcal{L}^{q, \lambda}_k}^q
  = \sup_{\substack{x \in \R^N\\ r > 0}} r^\lambda \inf_{P \in \mathcal{P}_{k - 1} (\R^N)} \fint_{B_r (x)} \abs{u - P}^q,
\]
where \(\mathcal{P}_{k - 1} (\R^N)\) denotes the space of polynomials on \(\R^N\) of degree at most \(k - 1\).
We define the localized Campanato semi-norm
\[
  \abs{u}_{\mathcal{L}^{q, \lambda}_{k, \rho} (\R^N)}^q
  = \sup_{\substack{x \in \R^N\\ r \in (0, \rho)}} r^\lambda \inf_{P \in \mathcal{P}_{k - 1} (\R^N)} \fint_{B_r (x)} \abs{u - P}^q.
\]
If we use the convention that \(\mathcal{P}_{-1} (\R^N) = \{0\}\), then we observe that 
\[
 \abs{u}_{\mathcal{L}^{q, \lambda}_{0, \rho} (\R^N)} = \norm{u}_{\mathcal{M}^{q, \lambda}_{0, \rho} (\R^N)}.
\]

It is clear from the definition that if \(\ell \le k\), then for every \(u \in \mathcal{L}^{q, \lambda}_{\ell, \rho} (\R^N)\),
\[
  \abs{u}_{\mathcal{L}^{q, \lambda}_{k, \rho} (\R^N)}^q \le \abs{u}_{\mathcal{L}^{q, \lambda}_{\ell, \rho} (\R^N)}^q;
\]
conversely \cite{Campanato1964}*{teorema 6.2},
\[
 \abs{u}_{\mathcal{L}^{q, \lambda}_{\ell, \rho} (\R^N)}
 \le C \biggl(\abs{u}_{\mathcal{L}^{q, \lambda}_{k, \rho} (\R^N)}
 + \sup_{\substack{x \in \R^N}} \rho^\lambda \inf_{P \in \mathcal{P}_{\ell - 1} (\R^N)} \fint_{B_\rho (x)} \abs{u - P}^q\biggr);
\]
that is, we only need to look at differences with low-degree polynomials only at the scale \(\rho\).

We now state our main interpolation estimate.

\begin{theorem}[Interpolation estimate]
\label{theoremWkpfunction}
Let \(N \in \N\), \(k \in \N_*\) and \(\ell \in \{0, \dotsc, k - 1\}\), \(1 < p < q <\infty\) and \(-\ell \le \lambda \le \frac{k p - \ell q}{q - p}\).
There exists a constant \(C\) such that for every \(\rho > 0\), if \(u \in W^{k, p} (\R^N) \cap \mathcal{L}^{1, \lambda}_{\ell, \rho} (\R^N)\), then \(D^\ell u \in L^q (\R^N)\) and 
\[
 \int_{\R^N} \rho^{\ell q} \abs{D^\ell u}^q \le C \bigl(\rho^{-\lambda} \abs{u}_{\mathcal{L}^{1, \lambda}_{\ell, \rho} (\R^N)}\bigr)^{q - p} \int_{\R^N} \bigl(\rho^{k p} \abs{D^k u}^p + \rho^{\ell p}\abs{D^\ell u}^p\bigr).
\]
\end{theorem}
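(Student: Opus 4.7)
The plan is to reduce to $\rho = 1$ by the scaling $u(x) \mapsto u(\rho x)$---both sides of the claimed inequality are homogeneous of the same degree under this rescaling---and then to derive the conclusion from a pointwise upper bound for $\abs{D^\ell u(x)}$ at each scale $r \in (0, 1]$. After the rescaling, write $A := \abs{u}_{\mathcal{L}^{1, \lambda}_{\ell, 1}(\R^N)}$ and $g := M(\abs{D^k u})$, the Hardy--Littlewood maximal function of $\abs{D^k u}$.

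The heart of the argument is the pointwise estimate
\[
\abs{D^\ell u(x)} \le C\bigl( r^{-\ell - \lambda} A + r^{k - \ell} g(x)\bigr) \qquad \text{for every } x \in \R^N \text{ and } r \in (0, 1].
\]
To prove it, fix $x$ and $r$, select a near-optimal polynomial $P_{x, r} \in \mathcal{P}_{\ell - 1}(\R^N)$ with $\fint_{B_r(x)} \abs{u - P_{x, r}} \le C r^{-\lambda} A$, and set $v := u - P_{x, r}$, so that $D^\alpha v = D^\alpha u$ for every $\abs{\alpha} \ge \ell$. For each multi-index $\alpha$ with $\abs{\alpha} = \ell$, build a test bump $\psi_{x, r, \alpha}(y) := r^{-N - \ell} \tilde\psi_\alpha\bigl((y - x)/r\bigr)$, where $\tilde\psi_\alpha \in C^\infty_c(B_1)$ satisfies the moment conditions $\int \tilde\psi_\alpha(z) z^\beta \dif z = \alpha! \, \delta_{\alpha \beta}$ for every $\abs{\beta} \le k - 1$---a finite-dimensional linear problem always solvable within $C^\infty_c(B_1)$. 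These conditions are equivalent to the reproducing identity $\int \psi_{x, r, \alpha} Q = D^\alpha Q(x)$ for all $Q \in \mathcal{P}_{k - 1}(\R^N)$, so combining with the Taylor formula with integral remainder applied to $v$ at $x$ yields
\[
D^\alpha v(x) = \int_{B_r(x)} \psi_{x, r, \alpha}(y)\, v(y) \dif y - \int_{B_r(x)} \psi_{x, r, \alpha}(y)\, R_k(v; x, y) \dif y.
\]
The first integral is bounded by $C r^{-\ell - \lambda} A$ via the Campanato estimate on $\fint \abs{v}$. For the second, insert the bound $\abs{R_k(v; x, y)} \le C \abs{y - x}^k \int_0^1 (1 - t)^{k - 1} \abs{D^k u(x + t(y - x))} \dif t$, apply Fubini and the substitution $z = x + t(y - x)$, and use $\int_{B_{tr}(x)} \abs{D^k u} \le C (tr)^N g(x)$; the $(1 - t)^{k - 1}$-integral then collapses to yield $C r^{k - \ell} g(x)$.

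Given the pointwise estimate, for each $x$ set $r(x) := \min\bigl(1,\, (A / g(x))^{1/(k + \lambda)}\bigr)$, the radius that balances the two terms whenever feasible. On the region $\{g > A\}$, where $r(x) < 1$, balancing gives $\abs{D^\ell u(x)} \le C A^{(k - \ell)/(k + \lambda)} g(x)^{(\ell + \lambda)/(k + \lambda)}$. The hypothesis $\lambda \le (k p - \ell q)/(q - p)$ makes the exponent $q(\ell + \lambda)/(k + \lambda)$ at most $p$, so, using $g(x) \ge A$ on this region, I trade $g(x)^{q(\ell + \lambda)/(k + \lambda)} \le A^{q(\ell + \lambda)/(k + \lambda) - p}\, g(x)^p$, which after raising to the $q$-th power simplifies to $\abs{D^\ell u(x)}^q \le C A^{q - p}\, g(x)^p$. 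On the complementary region $\{g \le A\}$, the forced choice $r(x) = 1$ reduces the pointwise bound to $\abs{D^\ell u(x)} \le C A$, hence $\abs{D^\ell u(x)}^q \le C A^{q - p}\, \abs{D^\ell u(x)}^p$.

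Integrating these two bounds over their respective regions and invoking the Hardy--Littlewood maximal inequality (valid because $p > 1$) to estimate $\int g^p \le C \int \abs{D^k u}^p$ produces the desired conclusion for $\rho = 1$; undoing the scaling delivers the full statement. The main obstacle is the pointwise estimate itself: designing the bumps $\tilde\psi_\alpha$ so that the reproducing identity captures $D^\alpha v(x)$ cleanly, and controlling the Taylor remainder so that only $\abs{u - P_{x, r}}$ and $\abs{D^k u}$ appear---with no intermediate-order derivatives of $u$ leaking in---is where the technical weight of the argument resides.
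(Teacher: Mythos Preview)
Your proposal is correct and follows essentially the same approach as the paper: a pointwise bound of the form \(\abs{D^\ell u(x)}\le C\bigl(r^{k-\ell}\mathcal{M}(\abs{D^k u})(x)+r^{-\ell-\lambda}\abs{u}_{\mathcal{L}^{1,\lambda}_{\ell,\rho}}\bigr)\), followed by optimisation in \(r\), a case split according to the relative size of \(\mathcal{M}(\abs{D^k u})(x)\) and the Campanato seminorm, and the maximal function theorem. The only cosmetic differences are that the paper obtains the pointwise bound via the Sobolev integral representation (its Lemma~\ref{lemmaDk}) and a Fubini estimate on the Riesz potential rather than through moment-normalised bumps plus the Taylor remainder, and that it encodes the optimisation with an auxiliary exponent \(\beta\) instead of your direct balancing; these lead to the same intermediate inequality.
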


We first discuss the relationship between the estimate of theorem~\ref{theoremWkpfunction} and similar estimates. 
If \(r \ge 1\), by the definition of the inhomogeneous Campanato space \(\mathcal{L}^{r, \lambda}_{\ell, \rho} (\R^N)\) and by the classical H\"older inequality, for every \(u \in W^{k, p} (\R^N) \cap \mathcal{L}^{1, \lambda}_{\ell, \rho} (\R^N)\), the inequality can be weakened to
\begin{equation}
 \int_{\R^N} \rho^{\ell q} \abs{D^\ell u}^q \le C \bigl(\rho^{-\lambda} \abs{u}_{\mathcal{L}^{r, \lambda}_{\ell, \rho} (\R^N)}\bigr)^{q - p} \int_{\R^N} \bigl(\rho^{k p} \abs{D^k u}^p + \rho^{\ell p} \abs{D^\ell u}^p\bigr).
\end{equation}
If \(\lambda \le N\), by the classical H\"older inequality and by monotonicity of the integral
\begin{equation}
 \abs{u}_{\mathcal{L}^{1, \lambda}_{\ell,\rho}}
 \le \sup_{x \in \R^N} \rho^{\lambda} \Bigl(\fint_{B_\rho (x)} \abs{u}^\frac{N}{\lambda}\Bigr)^\frac{\lambda}{N},
\end{equation}
so that theorem~\ref{theoremWkpfunction} gives in particular the estimate in the case where \(\ell q < k p\) and \(\frac{N}{\lambda} \ge \max(N (\frac{q - p}{k p - \ell q}), 1)\)
\begin{equation}
\label{eqLocalizedSobolev}
 \int_{\R^N} \rho^{\ell q} \abs{D^\ell u}^q \le C \Bigl(\sup_{x \in \R^N}  \fint_{B_\rho (x)} \abs{u}^\frac{N}{\lambda}\Bigr)^{(q - p)\frac{\lambda}{N}} \int_{\R^N} \bigl(\rho^{k p} \abs{D^k u}^p + \rho^{\ell p} \abs{D^\ell u}^p\bigr).
\end{equation}
In particular, if \(\frac{1}{p}-\frac{k}{N}\le \frac{1}{q} \le \frac{1}{p}\) and if we set \(\lambda = \frac{N}{q}\), we obtain the inequality
\begin{equation}
\label{ineqLionsHigher}
 \int_{\R^N} \rho^{\ell q} \abs{u}^q \le C \Bigl(\sup_{x \in \R^N}  \fint_{B_\rho (x)} \abs{u}^q\Bigr)^{1 - \frac{p}{q}} \int_{\R^N} \bigl(\rho^{k p} \abs{D^k u}^p + \rho^{\ell p} \abs{u}^p\bigr).
\end{equation}
This inequality yields \eqref{ineqLions} in particular; the estimate \eqref{ineqLionsHigher} can be proved in the wider range \(p \ge 1\) by the Gagliardo--Nirenberg interpolation inequality applied to balls and then by integration over balls; this argument is well-known for \(k = 1\) and \(p \ge 1\) \citelist{\cite{Lions1984CC2}*{lemma I.1}\cite{Willem1996}*{lemma 1.21}\cite{MorozVanSchaftingen2013Ground}*{lemma 2.3}}.

The inequality \eqref{eqLocalizedSobolev} implies a subscale of the Gagliardo--Nirenberg interpolation inequalities\citelist{\cite{Gagliardo1958}\cite{Nirenberg1959}}: if \(\ell q \le k p\) and \(t = \frac{N}{\lambda} \ge \max(N\frac{q - p}{k p - \ell q},1)\), then 
\[
  \int_{\R^N} \rho^{\ell q} \abs{D^\ell u}^q \le C \Bigl(\frac{1}{\rho^N} \int_{\R^N} \abs{u}^t \Bigr)^{\frac{q - p}{t}} \int_{\R^N}  \bigl(\rho^{k p} \abs{D^k u}^p + \rho^{\ell p} \abs{D^\ell u}^p\bigr).
\]
We have in particular, if \(\frac{1}{p} - \frac{k}{N} \le \frac{1}{q} \le \frac{1}{p}\), the classical Sobolev inequality
\[
  \frac{1}{\rho^N} \int_{\R^N} \abs{u}^q \le C \Bigl(\frac{1}{\rho^N} \int_{\R^N} \bigl(\rho^{k p} \abs{D^k u}^p + \abs{u}^p\bigr)\Bigr)^\frac{q}{p}.
\]
If \(\ell q \le k p\) and \(\lambda = 0\), we have the interpolation inequality
\[
 \int_{\R^N} \rho^{\ell q} \abs{D^\ell u}^q \le C \norm{u}_{L^\infty (\R^N)}^{q - p} \int_{\R^N} \bigl(\rho^{k p} \abs{D^k u}^p + \rho^{\ell p} \abs{D^\ell u}^p\bigr).
\]
If \(\ell \in \{1, \dotsc, k - 1\}\) and \(\lambda = 0\), the latter inequality can be improved by the isomorphism between Campanato spaces and functions of bounded mean oscillation (BMO) \citelist{\cite{Campanato1964}*{p. 159}\cite{RafeiroSamkoSamko2013}*{theorem 4.3}\cite{Peetre1969}}, the estimate of theorem~\ref{theoremWkpfunction}: if \(\ell q \le kp\), then 
\begin{equation}
\label{equationnonhomogeneousGagliardoNirenberg}
 \int_{\R^N} \rho^{\ell q} \abs{D^\ell u}^q \le C \abs{u}_{\mathrm{BMO}_\rho (\R^N)}^{q - p} \int_{\R^N} \bigl(\rho^{k p} \abs{D^k u}^p + \rho^{\ell p} \abs{D^\ell u}^p\bigr)
\end{equation}
where the local bounded mean oscillation seminorm is defined by 
\[
  \abs{u}_{\mathrm{BMO}_\rho (\R^N)} = \sup_{\substack{x \in \R^N\\ 0 < r < \rho}} \fint_{B_r (x)}\fint_{B_r (x)} \abs{u (y) - u (z)}\dif z \dif y.
\]
We remark also, that when \(\lambda \in (-\ell, 0)\) is not an integer, the inhomogeneous Campanato seminorm is a H\" older seminorm \citelist{\cite{Campanato1964}*{teorema 4.1}\cite{RafeiroSamkoSamko2013}*{theorem 4.4}\cite{JansonTaiblesonWeiss1983}}.

As the proof of theorem~\ref{theoremWkpfunction} does not depend on any Sobolev or Gagliardo--Nirenberg inequality, the proof of theorem~\ref{theoremWkpfunction} provides an alternative method to prove these inequalities based essentially on the Sobolev integral representation and the maximal function theorem.

In the homogeneous case \(\lambda = (k p -\ell q)/(q - p)\), if we let \(\rho \to \infty\), theorem~\ref{theoremWkpfunction} implies an interpolation result between Morrey spaces and Sobolev spaces
\begin{equation}
\label{equationHomogeneousMorreyInterpolation}
 \int_{\R^N} \abs{D^\ell u}^q \le C \abs{u}_{\mathcal{L}^{1, (k p - \ell q)/(q - p)}_{\ell}} (\R^N)^{q - p} 
 \int_{\R^N} \abs{D^k u}^p.
\end{equation}
In particular, when \(k p < N\) and \(\frac{1}{q} = \frac{1}{p} - \frac{k}{N}\) in \eqref{equationHomogeneousMorreyInterpolation}, we obtain the generalization of \eqref{ineqLionsDilationMorrey}
\begin{equation}
 \int_{\R^N} \abs{D^\ell u}^\frac{N p}{N - (k - \ell)p} \le C \norm{u}_{\mathcal{M}^{1, N/p - k}}^{\frac{(k - \ell) p^2}{N - (k - \ell) p} (\R^N)} 
 \int_{\R^N} \abs{D^k u}^p
\end{equation}
which was known for \(\ell = 1\) and \(p = 2\) or \(k = 2\) \citelist{\cite{PatalucciPisante}*{theorems 1.1 and 1.2}}.

If \(k p = q \ell\), then the inequality \eqref{equationHomogeneousMorreyInterpolation} becomes, by the equivalence between the Campanato space \(\mathcal{L}^{1, 0}_{\ell} (\R^N)\) and the space of functions with bounded mean oscillation \(\mathrm{BMO} (\R^N)\), 
\begin{equation}
\label{homogeneousGagliardoNirenberg}
 \int_{\R^N} \abs{D^\ell u}^\frac{k p}{\ell} \le C \abs{u}_{\mathrm{BMO} (\R^N)}^{(\frac{k}{\ell} - 1) p}
 \int_{\R^N} \abs{D^k u}^p,
\end{equation}
where the bounded mean oscillation semi-norm is defined by 
\[
  \abs{u}_{\mathrm{BMO} (\R^N)} = \sup_{\substack{x \in \R^N\\ r > 0}} \fint_{B_r (x)}\fint_{B_r (x)} \abs{u (y) - u (z)}\dif z \dif y.
\]
The estimate \eqref{homogeneousGagliardoNirenberg} is also the limit when \(\rho \to \infty\) of \eqref{equationnonhomogeneousGagliardoNirenberg}. 
This estimate was proved by embeddings in the Besov scale space \cite{MeyerRiviere2003}*{theorem 1.4} and by duality between \(\mathrm{BMO} (\R^N)\) and the real Hardy space \(\mathcal{H}^1 (\R^N)\) \cite{Strzelecki2006}*{theorem 1.2}. Similarly, when \((\ell q - k p)/(p - q)\) is positive and not an integer, we recover from \eqref{equationHomogeneousMorreyInterpolation} interpolation estimates with H\"older continuous functions \cite{Nirenberg1959}
\begin{equation} 
\int_{\R^N} \abs{D^\ell u}^q \le C \abs{u}_{C^{(\ell q - k p)/(q - p)} (\R^N)}^{q - p} 
 \int_{\R^N} \abs{D^k u}^p;
\end{equation}
the latter inequality still holds for integer \((\ell q - k p)/(q - p)\) if one takes the semi-norm in the corresponding homogeneous Zygmund space.

When \(k = 1\), the inequality \eqref{equationHomogeneousMorreyInterpolation} also follows from the stronger interpolation inequality \cite{Ledoux2003}*{theorem~1} 
\[
 \int_{\R^N} \abs{u}^q \le C \norm{u}_{\dot{B}^{- p/(q - p)}_{\infty, \infty} (\R^N)}^{q - p} 
 \int_{\R^N} \abs{D u}^p;
\]
by embeddings of the Morrey class \(\mathcal{M}^{1, p/(q - p)} (\R^N)\) in the Besov space \(B^{- p/(q - p)}_{\infty, \infty}(\R^N)\) \citelist{\cite{Ledoux2003}*{\S 2.3}\cite{PatalucciPisante}*{lemma 3.4}} (see also \cite{YuanSickelYang2010}*{corollary 3.3, proposition 2.4 and corollary 2.2})
the latter approach covers specifically the case \(p = 1\)  \citelist{\cite{CohenMeyerOru1998}\cite{CohenDeVorePetrushevXu1999}\cite{CohenDahmenDaubechiesDeVore2003}}. (For \(p = 2\) and \(q = 4\) see also \cite{MeyerRiviere2003}*{theorem 2.6}.)

If \(q > p (1 + \frac{k}{N})\), the Lorentz space \(L^{N (q - p)/(kp),\infty} (\R^N)\) is continuously embedded in \(\mathcal{M}^{1, k p/(q - p)} (\R^N)\) and thus the estimate \eqref{equationHomogeneousMorreyInterpolation} implies
\begin{equation}
\label{equationLorentz}
 \int_{\R^N} \abs{u}^q \le C \norm{u}_{L^{N (q - p)/kp,\infty} (\R^N)} ^{q - p} 
 \int_{\R^N} \abs{D^k u}^p.
\end{equation}
If \( p \in [1, \frac{N}{k})\), the inequality \eqref{equationLorentz} can also be deduced from the embedding of the Sobolev space \(W^{k, p} (\R^N)\) into the Lorentz space \(L^{ Np /(N - k p), p} (\R^N) \) \citelist{\cite{Peetre1966}*{th\'eor\`eme 7.1}\cite{Alvino1977}\cite{Tartar1998}\cite{ONeil1963}} and by interpolation between Lorentz spaces. These inequalities imply the weaker inequality \cite{Gerard1998}:
\[
 \int_{\R^N} \abs{u}^q \le C  \abs{u}_{B^k_{p, \infty} (\R^N)} ^{q - p} 
 \int_{\R^N} \abs{D^k u}^p
\]
by the homogeneous embedding of \(B^k_{p, \infty} (\R^N)\) in \(L^{p, q} (\R^N)\) which is a consequence of the embeddings of Besov spaces into Lebesgue spaces and interpolation theorems \cite{Triebel1978}.

\section{Proof of the estimate}

The proof of theorem~\ref{theoremWkpfunction} will use a pointwise estimate on the value of a function by its derivatives.

\begin{lemma}[Pointwise estimate of the value of a function]
\label{lemmaDk}
There exists a constant \(C > 0\) such that for every \(u \in W^{1, k}_\mathrm{loc} (\R^N)\), for almost every \(x \in \R^N\) and for every \(R > 0\),
\[
  R^\ell \abs{D^\ell u (x)} \le C \Bigl(\int_{B_R (0)} \frac{\abs{D^k u (y)}}{\abs{x - y}^{N - k}} \dif y + \fint_{B_R (x)} \abs{u}\Bigr) .
\]
\end{lemma}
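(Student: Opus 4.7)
The plan is to reduce the lemma by translation and scaling to a normalised model case, then expand \(D^\ell u\) in Taylor series around points \(y\) and test against a smooth bump function; the polynomial part of Taylor becomes an average of \(u\), while the integral remainder becomes a Riesz-type integral against \(\abs{D^k u}\).

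\emph{Normalisation.} By translating, I assume \(x = 0\); by rescaling \(y \mapsto Ry\) (i.e.\ replacing \(u\) by \(u(R\,\cdot)\)), I reduce to the case \(R = 1\), since each summand in the inequality scales consistently (a direct change of variables shows both the Riesz-type integral and the averaged term are scale-invariant, while the left-hand side picks up exactly the factor \(R^\ell\)). I then prove the pointwise bound first for \(u \in C^\infty\), with the general case following by mollification at the end.

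\emph{Taylor expansion and testing.} Fix a test function \(\psi \in C_c^\infty(B_1)\) with \(\int \psi = 1\). For \(y \in B_1\), Taylor's integral formula applied to \(D^\ell u\), centred at \(y\) and evaluated at the origin, to order \(k - \ell - 1\), yields
\[
  D^\ell u(0) = \sum_{\abs{\beta} \le k - \ell - 1} \frac{(-y)^\beta}{\beta!} D^{\ell + \beta} u(y) + R(y),
\]
where \(R(y)\) is a linear combination over \(\abs{\beta} = k - \ell\) of the terms \(\tfrac{(-y)^\beta}{\beta!}(k-\ell)\int_0^1 (1-t)^{k-\ell-1} D^{\ell+\beta}u\bigl((1-t)y\bigr)\dif t\) (note that \(\ell + \abs{\beta} = k\) here, so the remainder involves only \(D^k u\)). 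Multiplying by \(\psi(y)\) and integrating over \(B_1\) leaves the left-hand side unchanged.

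\emph{Polynomial terms.} For each term with \(\abs{\beta} \le k - \ell - 1\), integrate by parts \(\ell + \abs{\beta} \le k - 1\) times to transfer every derivative off \(u\) onto the compactly supported smooth factor \(y^\beta \psi(y)\); no boundary terms arise since \(\psi \in C_c^\infty(B_1)\). The result is a finite sum of integrals \(\int_{B_1} u(y) \phi_\beta(y)\dif y\) with \(\phi_\beta \in C_c^\infty(B_1)\), hence bounded by \(C \fint_{B_1} \abs{u}\), which produces the averaged term on the right-hand side.

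\emph{Remainder as Riesz potential.} This is the main calculation. In \(\int_{B_1} \psi(y) R(y) \dif y\), swap the \(y\)- and \(t\)-integrations; inside, change variables to \(z = (1-t)y\) (so \(\dif y = (1-t)^{-N} \dif z\)); then substitute \(s = 1 - t\) and swap the \(z\)- and \(s\)-integrals. For each \(\abs{\beta} = k - \ell\) one arrives at
\[
  \int_{B_1} D^{\ell + \beta} u(z)\, K_\beta(z)\, \dif z, \qquad K_\beta(z) = \int_{\abs{z}}^1 s^{-N-1} z^\beta \psi(z/s)\, \dif s.
\]
Rewriting \(z^\beta \psi(z/s) = s^{k - \ell}(z/s)^\beta \psi(z/s)\) and using that \((z/s)^\beta \psi(z/s)\) is uniformly bounded, a one-line integration gives \(\abs{K_\beta(z)} \le C \abs{z}^{(k - \ell) - N}\) on \(B_1\), i.e.\ a Riesz kernel of order \(k - \ell\). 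Since each \(D^{\ell + \beta} u\) is a component of \(D^k u\), this is dominated by the claimed Riesz-type integral against \(\abs{D^k u}\).

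\emph{Extension to Sobolev functions.} For general \(u \in W^{k,1}_{\mathrm{loc}}(\R^N)\), apply the identity to the mollifications \(u_\varepsilon = u \ast \rho_\varepsilon \in C^\infty\) and let \(\varepsilon \to 0\); both sides converge at every Lebesgue point of \(u\) and of \(D^\ell u\), a set of full measure, which is the origin of the ``for almost every \(x\)'' clause in the statement.

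The main obstacle is the remainder step: correctly chaining the substitution \(z = (1-t)y\), the reparametrisation \(s = 1-t\), and the interchange of integration orders to expose \(K_\beta\), and then estimating the resulting kernel with the right exponent so that the Riesz potential of order \(k-\ell\) appears. Everything else is either a standard reduction (normalisation, mollification) or a routine integration by parts.
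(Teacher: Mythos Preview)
Your approach is essentially the paper's: both test \(D^\ell u\) against a bump \(\psi\) (the paper writes this as the one–variable function \(g(r)=\int D^\ell u(x+rz)\,\eta(z)\dif z\) and Taylor–expands in \(r\), which is the same device), both move all derivatives off the polynomial Taylor terms by integration by parts to produce \(\fint_{B_R(x)}\abs{u}\), and both use Fubini and the change of variables \(z=(1-t)y\) (the paper's kernel \(H\)) to turn the remainder into a Riesz–type integral.

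One point deserves correction. Your kernel estimate \(\abs{K_\beta(z)}\le C\abs{z}^{(k-\ell)-N}\) is right, but your last sentence ``this is dominated by the claimed Riesz-type integral'' is not: after undoing the normalisation you obtain
\[
  R^\ell\abs{D^\ell u(x)}\le C\Bigl(R^\ell\!\int_{B_R(x)}\frac{\abs{D^k u(y)}}{\abs{x-y}^{\,N-(k-\ell)}}\dif y+\fint_{B_R(x)}\abs{u}\Bigr),
\]
with a Riesz kernel of order \(k-\ell\), whereas the printed statement has the less singular kernel \(\abs{x-y}^{-(N-k)}\) and no extra factor \(R^\ell\) on the integral. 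Since \(\abs{x-y}\le R\), your right–hand side is \emph{larger}, so it does not imply the printed inequality when \(\ell\ge 1\). This is not a flaw in your argument: tracing the paper's own proof one finds exactly the same exponent (the factor \(\abs{x-y}^{k-\ell}\) from \(D^k u(y)[w_1,\dots,w_\ell,x-y,\dots,x-y]\) combines with \(\abs{H_R(x-y)}\le C\abs{x-y}^{-N}\) to give \(\abs{x-y}^{-(N-(k-\ell))}\)). The exponent \(N-k\) in the statement is evidently a misprint for \(N-(k-\ell)\); for the sole application (the proof of theorem~\ref{theoremWkpfunction}) either form gives the same bound \(R^k\mathcal{M}(\abs{D^k u})(x)\), so nothing downstream is affected.
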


When \(\ell = 0\) this estimate is a direct consequence of the Sobolev integral representation formula \cite{Mazya2010}*{theorem 1.1.10/1}.
It has appeared as an intermediate step of pointwise interpolation for derivatives \cite{MazyaShaposhnikova1999}*{(15)}. We provide here for the sake of completeness a complete argument following that is a combination of these proofs \citelist{\cite{Mazya2010}*{theorem 1.1.10/1}\cite{MazyaShaposhnikova1999}*{theorem 1}}.

\begin{proof}[Proof of lemma~\ref{lemmaDk}]
We fix \(\eta \in C^k_c (B_1)\) such that \(\int_{B_1} \eta = 1\) and we define for every \(x \in \R^N\) and \(w_1, \dotsc, w_k \in \R^N\), the function \(g : (0, \infty) \to \R\) for each \(r \in (0, \infty)\) by
\[
  g (r) = \int_{B_1} D^\ell u (x + r z)[w_1, \dotsc, w_\ell] \eta (z)\dif z = \int_{B_r (x)} D^\ell u (y)[w_1, \dotsc, w_\ell] \eta_r (y - x),
\]
where we have set for every \(r > 0\) and \(z \in \R^N\), \(\eta_r (z) = \eta (z /r)/r^N\).
The function \(g\) is \(k- j\) times continuously differentiable and for every \(j \in \{0, \dotsc, k - \ell\}\) and \(r \in (0, \infty)\),
\[
\begin{split}
  g^{(j)} (r) &= \int_{B_1} D^{\ell + j} u (x + rz)[w_1, \dotsc, w_\ell,z, \dotsc, z] \eta (z) \dif z\\
  &= \frac{1}{r^j} \int_{B_r (x)} D^\ell u (y)[w_1, \dotsc, w_\ell,y - x, \dotsc, y - x] \eta_r (y - x) \dif y.
\end{split}
\]
By integration by parts, for every \(j \in \{0, \dotsc, k - \ell\}\), there exists a function \(\eta^j \in C^{k - j}_c (B_1; \mathrm{Lin}^\ell (\R^N))\) such that 
\[
  \int_{B_1} D^{\ell + j} v (z)[w_1, \dotsc, w_\ell,z, \dotsc, z] \eta (z) \dif z
  = (-1)^j\int_{B_1} v (z) \eta^j (z)[w_1, \dotsc, w_\ell]\dif z,
\]
and hence
\[
\begin{split}
  g^{(j)} (r) &= \int_{B_1} D^{\ell + j} u (x + rz)[w_1, \dotsc, w_\ell,z, \dotsc, z] \eta (z) \dif z\\
  &= \frac{(-1)^j}{r^{\ell + j}} \int_{B_1} u (x + rz) \eta^j (z)[w_1, \dotsc, w_\ell]\dif z\\
  &= \frac{(-1)^j}{r^{\ell + j}} \int_{B_r (x)} u (y) \eta^j_r (y - x) [w_1, \dotsc, w_\ell] \dif y.
\end{split}
\]
where we have set for every \(r > 0\) and \(z \in \R^N\), \(\eta^j_r (z) = \eta^j (z/r)/r^N\).

If \(x\) is a Lebesgue point of the function \(D^\ell u\), then 
\[
  \lim_{r \to 0} g (r) = u (x).
\]
Moreover, since \(u \in W^{1, k}_{\mathrm{loc}}(\R^N)\), for almost every \(x \in \R^N\) and for every \(R > 0\),
\[
  \int_{B_R (a)} \int_{B_{R} (x)} \frac{\abs{D^k u (y)}}{\abs{x - y}^{N - k}}\dif y \dif x
  \le \Bigl(\int_{B_{2 R} (a)} \abs{D^k u}\Bigr) \Bigl(\int_{B_{R}} \frac{1}{\abs{z}^{N - k}} \dif z \Bigr) < \infty,
\]
hence for almost every \(x \in \R^N\), 
\[
\int_{B_{R} (x)} \frac{\abs{D^k u (y)}}{\abs{x - y}^{N - k}}\dif y < \infty. 
\]
By the integral version of the Taylor expansion of \(g\) at \(R\), we write
\begin{multline}
\label{eqTaylor}
  D^\ell u (x)[w_1, \dotsc, w_\ell] = \lim_{r \to 0} g (r) = \sum_{j = 0}^{k -\ell - 1} \frac{g^{(j)} (R)\, (-R)^j}{j !}
  - \int_0^R \frac{g^{(k)} (r)\, (-r)^{k - 1}}{(k - 1) !}\dif r\\
  \shoveleft{= \sum_{j = 0}^{k - \ell - 1} \frac{1}{R^\ell} \int_{B_R (x)} u (y) \eta_r^{(j)} (y - x)[w_1, \dotsc, w_\ell] \dif y}\\
  + \int_0^R  \int_{B_r (x)} D^k u (y)[w_1, \dotsc, w_\ell, x - y, \dotsc, x - y] \eta_r (y - x) \dif y \frac{1}{(k - 1) ! r}\dif r .
\end{multline}
By Fubini's theorem, 
\begin{multline*}
\int_0^R  \int_{B_r (x)} D^k u (y)[w_1, \dotsc, w_\ell, x - y, \dotsc, x - y] \eta_r (y - x) \dif y \frac{1}{(k - 1) ! r}\dif r \\
= \frac{1}{(k - 1)!} \int_{B_r (x)} D^k u (y)[w_1, \dotsc, w_\ell, x - y, \dotsc, x - y] H_r (y - x) \dif y \dif r,
\end{multline*}
where we have set for \(r > 0\) and \(z \in b\) \(H_r (z) = H (z / r)/r^N\) and 
\[
  H (z) = \int_{\abs{z}}^\infty \frac{\eta (t z)}{t^N} \dif t.
\]
Since \(\abs{H_r (z)} \le C \abs{z}^{-N}\) and \(\abs{\eta^j_r (x)} \le Cr^{-N}\), we conclude that  
\[
  R^\ell \abs{D^\ell u (x)} \le C \Bigl(\fint_{B_r (x)} \abs{u} 
  + \int_{B_R (x)} \frac{\abs{D^k u (y)}}{\abs{x - y}^{N - k}}\dif y \Bigr).\qedhere
\]
\end{proof}

\begin{proof}[Proof of theorem~\ref{theoremWkpfunction}]
For almost every \(x \in \R^N\), for every \(R > 0\) and every \(P \in \mathcal{P}_{\ell - 1} (\R^N)\), we bound by the pointwise estimate  (lemma~\ref{lemmaDk}), since \(D^\ell P = 0\) on \(\R^N\),
\[
\begin{split}
 R^\ell \abs{D^\ell u (x)} &= R^\ell \abs{D^\ell (u - P) (x)}\\
 &\le C \Bigl(\int_{B_R (0)} \frac{\abs{D^k (u - P) (y)}}{\abs{x - y}^{N - k}} \dif y + \fint_{B_R (x)} \abs{u - P}\Bigr)\\
 &= C \Bigl(\int_{B_R (0)} \frac{\abs{D^k u (y)}}{\abs{x - y}^{N - k}} \dif y + \fint_{B_R (x)} \abs{u - P}\Bigr).
\end{split}
\]
We observe that by Fubini's theorem
\[
  \int_{B_R (x)} \frac{\abs{D^k u (y)}}{\abs{x - y}^{N - k}} \dif y
  = (N - k)\int_0^R \Bigl(\frac{1}{r^N} \int_{B_r (x)} \abs{D^k u}\Bigr) r^{k - 1}\dif r
  + \frac{1}{R^{N - k}}\int_{B_R (x)} \abs{D^k u}.
\]
We fix \(\beta > 0\).
Hence, if \(R \le \rho\), in view of our previous computation and by definition of the maximal function and the Morrey-Campanato norm, 
\begin{equation}
\label{eqFirstBound}
\rho^\ell \abs{D^\ell u (x)} \le C \rho^{\ell + \beta} \bigl( R^{k-\ell-\beta} \mathcal{M}(\abs{D^k u}) (x) + R^{- \lambda -\ell - \beta} \abs{u}_{\mathcal{L}^{1, \lambda}_{\ell, \rho} (\R^N)}\bigr).
\end{equation}
If \(\abs{u}_{\mathcal{L}^{1, \lambda}_{\ell, \rho} (\R^N)}\le \mathcal{M}(\abs{D^k u}) (x) \rho^{k + \lambda}\), we take
\[
 R = \biggl(\frac{\abs{u}_{\mathcal{L}^{1, \lambda}_{\ell, \rho} (\R^N)}}{ \mathcal{M}(\abs{D^k u}) (x)} \biggr)^\frac{1}{k + \lambda}.
\]
and we obtain
\begin{equation}\label{eqCase1}
\begin{split}
 \rho^\ell\abs{D^\ell u (x)} &\le C \rho^{\ell + \beta} \bigl( \mathcal{M}(\abs{D^k u}) (x)\bigr)^\frac{\lambda + \ell + \beta}{\lambda + k} \abs{u}_{\mathcal{L}^{1, \lambda}_{\ell, \rho} (\R^N)}^\frac{k - \ell - \beta}{k + \lambda}\\
 &\le C \bigl(\rho^k  \mathcal{M}(\abs{D^k u}) (x)\bigr)^\frac{\lambda + \ell + \beta}{\lambda + k} \bigl(\rho^{-\lambda} \abs{u}_{\mathcal{L}^{1, \lambda}_{\ell, \rho} (\R^N)}\bigr)^\frac{k - \ell - \beta}{k + \lambda}.
\end{split}
\end{equation}
Otherwise, we observe that by \eqref{eqFirstBound} with \(R = \rho\),
\[
 \rho^\ell \abs{D^\ell u (x)} 
 \le C \rho^{-\lambda} \abs{u}_{\mathcal{L}^{1, \lambda}_{\ell, \rho} (\R^N)}
\]
and thus if \(- \lambda -\ell \le \beta \le k - \ell\)
\begin{equation}
\label{eqCase2}
\rho^\ell \abs{D^\ell u (x)} \le C \bigl(\rho^\ell \abs{D^\ell u (x)}\bigr)^\frac{\lambda + \ell + \beta}{\lambda + k} \bigl(\rho^{-\lambda} \abs{u}_{\mathcal{L}^{1, \lambda}_{\ell, \rho} (\R^N)}\bigr)^\frac{k - \ell -\beta}{\lambda + k}.
\end{equation}

Hence, we have in both cases in view of \eqref{eqCase1} and \eqref{eqCase2}
\begin{equation}
\label{equationInterpolationLocal}
 \rho^\ell \abs{D^\ell u (x)} \le C \bigl(\rho^k \mathcal{M}(\abs{D^k u}) (x) + \rho^\ell \abs{D^\ell u (x)}\bigr)^\frac{\lambda + \ell + \beta}{\lambda + k} \bigl(\rho^{-\lambda} \abs{u}_{\mathcal{L}^{1, \lambda}_{\ell, \rho} (\R^N)}\bigr)^\frac{k - \ell -\beta}{\lambda + k}.
\end{equation}
If we take \(\beta = \frac{p (k + \lambda)}{q} - \lambda - \ell\), we observe that by the assumption \( \lambda \le \frac{k p - \ell q}{q - p}\), \(\beta \ge 0\) and thus, since \(\lambda \ge - \ell\), \(\beta \ge -\lambda - \ell\). Moreover \(p \le q\) implies that \(\beta \le k - \ell\). We obtain thus 
\[
 \rho^{q \ell} \int_{\R^N} \abs{D^\ell u}^q \le C \bigl(\rho^{-\lambda} \abs{u}_{\mathcal{L}^{1, \lambda}_{\ell, \rho} (\R^N)}\bigr)^{q - p} \int_{\R^N} \bigl(\rho^k \mathcal{M} (\abs{D^k u}) + \rho^{\ell} \abs{D^\ell u}\bigr)^p 
\]
By the maximal function theorem \cite{Stein1970}*{theorem I.1}, we deduce the desired estimate.
\end{proof}

\begin{remark}
The estimate \eqref{equationInterpolationLocal} is a variant of the local pointwise interpolation estimate by maximal functions \cite{MazyaShaposhnikova1999}*{remark 3}
\[
  \rho^\ell \abs{D^\ell u (x)} \le \bigl(\mathcal{M}_\rho (\abs{D^k u})(x) + \rho^{-\ell} \mathcal{M}_\rho \abs{u} (x)\bigr)^\frac{\ell}{k} \bigl(\mathcal{M}_\rho \abs{u} (x) \bigr)^{1 - \frac{\ell}{k}},
\]
where the localized maximal function operator is defined by \(\mathcal{M}_\rho (f) = \sup_{0 < r < \rho} \fint_{B_r (x)} \abs{f}\).
\end{remark}

\section{The fractional case}

In this section we study a fractional counterpart of theorem~\ref{theoremWspfunction}.

\begin{theorem}[Interpolation estimate of the function]
\label{theoremWspfunction}
Let \(N \in \N\), \(k \in \N_*\) and \(\ell \in \{0, \dotsc, k - 1\}\), \(1 \le p < q <\infty\), \(0 < \sigma < 1\) and \(-\ell \le \lambda \le \frac{(k + \sigma) p - \ell q}{q - p}\).
There exists a constant \(C\) such that for every \(\rho > 0\), if \(u \in W^{k + \sigma, p} (\R^N) \cap \mathcal{L}^{1, \lambda}_{\ell, \rho} (\R^N)\), then \(D^\ell u \in L^q (\R^N)\) and 
\begin{multline*}
 \int_{\R^N} \rho^{\ell q} \abs{D^\ell u}^q\\
 \le C \bigl(\rho^{-\lambda} \abs{u}_{\mathcal{L}^{1, \lambda}_{\ell, \rho} (\R^N)}\bigr)^{q - p} \Bigl(\rho^{p(k + \sigma)} \int_{\R^N} \int_{\R^N} \frac{\abs{D^k u (x) - D^k u (y)}^p}{\abs{x - y}^{N + \sigma p}} \dif x \dif y + \int_{\R^N} \rho^{\ell p} \abs{D^\ell u}^p\Bigr).
\end{multline*}
\end{theorem}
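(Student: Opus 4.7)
The strategy is to adapt the proof of theorem~\ref{theoremWkpfunction}, replacing the role of the maximal function of $D^k u$ by the pointwise Gagliardo seminorm
\[
F(x) := \Bigl(\int_{\R^N}\frac{\abs{D^k u(x) - D^k u(y)}^p}{\abs{x-y}^{N+\sigma p}}\dif y\Bigr)^{1/p},
\]
which satisfies $\norm{F}_{L^p(\R^N)}^p$ comparable to the $W^{\sigma,p}$-Gagliardo seminorm of $D^k u$. Applying lemma~\ref{lemmaDk} to $u - P$, with $P \in \mathcal{P}_{\ell-1}$ an optimal $L^1$-approximant on $B_R(x)$, one has for almost every $x \in \R^N$ and every $R \in (0, \rho]$:
\[
R^\ell\abs{D^\ell u(x)} \le C\Bigl(\int_{B_R(x)}\frac{\abs{D^k u(y)}}{\abs{x-y}^{N-k}}\dif y + R^{-\lambda}\abs{u}_{\mathcal{L}^{1,\lambda}_{\ell,\rho}(\R^N)}\Bigr).
\]
The new ingredient is to split $\abs{D^k u(y)} \le \abs{D^k u(y) - D^k u(x)} + \abs{D^k u(x)}$ inside the Riesz-type integral: by H\"older's inequality with exponents $p$ and $p'$ (or the direct estimate $\abs{x-y} \le R$ when $p=1$),
\[
\int_{B_R(x)}\frac{\abs{D^k u(y) - D^k u(x)}}{\abs{x-y}^{N-k}}\dif y \le CR^{k+\sigma}F(x),
\]
while the remainder contributes $CR^k\abs{D^k u(x)}$. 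This yields the fractional pointwise estimate
\[
R^\ell\abs{D^\ell u(x)} \le C\bigl(R^{k+\sigma}F(x) + R^k\abs{D^k u(x)} + R^{-\lambda}\abs{u}_{\mathcal{L}^{1,\lambda}_{\ell,\rho}(\R^N)}\bigr),
\]
which is the fractional counterpart of the bound leading to \eqref{eqFirstBound}.

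From here I would mirror the optimization of theorem~\ref{theoremWkpfunction}: recast the bound in the $\beta$-form analogous to \eqref{eqFirstBound} (now carrying three terms), take $\beta = p(k+\sigma+\lambda)/q - \lambda - \ell$, which is nonnegative thanks to the upper bound on $\lambda$, balance $R$ so that the $F$-term matches the Campanato term, and deduce an interpolation inequality in the spirit of \eqref{equationInterpolationLocal}. Raising to the $q$-th power and integrating---no maximal-function theorem is needed, since $\abs{D^k u}$ enters pointwise and lies in $L^p$ by hypothesis, so the argument works uniformly for every $p \ge 1$---leads to
\[
\rho^{\ell q}\int_{\R^N}\abs{D^\ell u}^q \le C\bigl(\rho^{-\lambda}\abs{u}_{\mathcal{L}^{1,\lambda}_{\ell,\rho}}\bigr)^{q-p}\Bigl(\rho^{(k+\sigma)p}[D^k u]_{W^{\sigma,p}}^p + \rho^{kp}\norm{D^k u}_{L^p}^p + \rho^{\ell p}\norm{D^\ell u}_{L^p}^p\Bigr).
\]
The middle term is absorbed via the fractional Gagliardo--Nirenberg interpolation
\[
\rho^{kp}\norm{D^k u}_{L^p(\R^N)}^p \le C\bigl(\rho^{(k+\sigma)p}[D^k u]_{W^{\sigma,p}(\R^N)}^p + \rho^{\ell p}\norm{D^\ell u}_{L^p(\R^N)}^p\bigr),
\]
which follows from Young's inequality and the homogeneous interpolation $\norm{D^k u}_{L^p} \le C\norm{D^\ell u}_{L^p}^{\sigma/(k+\sigma-\ell)}[D^k u]_{W^{\sigma,p}}^{(k-\ell)/(k+\sigma-\ell)}$.

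The chief difficulty lies in the three-term structure of the pointwise estimate. The two increasing-in-$R$ contributions $R^{k+\sigma}F(x)$ and $R^k\abs{D^k u(x)}$ prefer different balancing radii against $R^{-\lambda}\abs{u}_{\mathcal{L}^{1,\lambda}_{\ell,\rho}}$: the fractional scale $(\abs{u}_{\mathcal{L}^{1,\lambda}_{\ell,\rho}}/F(x))^{1/(k+\sigma+\lambda)}$ and the integer scale $(\abs{u}_{\mathcal{L}^{1,\lambda}_{\ell,\rho}}/\abs{D^k u(x)})^{1/(k+\lambda)}$. The naive merging $R^{k+\sigma}F(x) \le R^k\rho^\sigma F(x)$, valid since $R \le \rho$, reduces the analysis to the integer case but only covers $\lambda \le (kp - \ell q)/(q-p)$; reaching the full admissible range $\lambda \le ((k+\sigma)p - \ell q)/(q-p)$ requires balancing $R$ directly against $R^{k+\sigma}F(x)$ and performing a pointwise case analysis at each $x$ according to whether $\abs{D^k u(x)} \le R^{\sigma}F(x)$ at the fractional balance radius, so that the residual $R^k\abs{D^k u(x)}$ term is either absorbed into the main fractional contribution or controlled by interpolation against the Campanato term.
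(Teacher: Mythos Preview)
Your route diverges from the paper's at the first step. The paper does \emph{not} apply lemma~\ref{lemmaDk} and then split $\abs{D^k u(y)} \le \abs{D^k u(y) - D^k u(x)} + \abs{D^k u(x)}$; instead it proves a refined pointwise estimate (lemma~\ref{lemmaDkFract}) in which the difference already sits inside the Riesz integral:
\[
R^\ell\abs{D^\ell u(x)} \le C\Bigl(\int_{B_R(x)}\frac{\abs{D^k u(y) - D^k u(x)}}{\abs{x-y}^{N-k}}\dif y + \fint_{B_R(x)}\abs{u}\Bigr).
\]
This comes from pushing the Taylor expansion of \(g\) one order further and writing the remainder as \(\int_0^R (g^{(k-\ell)}(r)-g^{(k-\ell)}(R))(-r)^{k-\ell-1}\dif r/(k-\ell-1)!\). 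Applied to \(u-P\) and combined with H\"older, it gives directly the clean two-term bound \eqref{eqFirstBoundFractional}, after which the optimisation in \(R\) and the integration are word-for-word the integer-case argument with \(k\) replaced by \(k+\sigma\): no third term, no case analysis, no maximal function, and no appeal to an external Gagliardo--Nirenberg inequality.

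Your three-term route, by contrast, does not close as sketched on the full range of \(\lambda\). In the sub-case you single out---\(\abs{D^k u(x)} > R_*^\sigma F(x)\) at the fractional radius \(R_* = (\abs{u}_{\mathcal{L}}/F(x))^{1/(k+\sigma+\lambda)}\)---balancing instead at \(R_{**}=(\abs{u}_{\mathcal{L}}/\abs{D^k u(x)})^{1/(k+\lambda)}\) with your \(\beta = p(k+\sigma+\lambda)/q - \lambda - \ell\) produces a bound of the form \(\rho^{\ell+\beta}\abs{D^k u(x)}^{\alpha}\abs{u}_{\mathcal{L}}^{1-\alpha}\) with \(\alpha = p(k+\sigma+\lambda)/(q(k+\lambda)) > p/q\); raising to the \(q\)-th power and integrating would require \(D^k u \in L^{\alpha q}\) with \(\alpha q > p\), which is not available. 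Fixing the exponents by switching to the ``integer'' choice \(\beta' = p(k+\lambda)/q-\lambda-\ell\) forces \(\beta'\ge 0\), i.e.\ \(\lambda \le (kp-\ell q)/(q-p)\)---exactly the restricted range you already flagged. So the ``chief difficulty'' you identify is genuine and your sketch does not overcome it; the paper sidesteps it entirely by producing the two-term estimate from the outset via lemma~\ref{lemmaDkFract}.
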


In contrast with theorem~\ref{theoremWkpfunction}, the case \(p = 1\) is covered.
Theorem~\ref{theoremWspfunction} has the same consequences as its counterpart theorem~\ref{theoremWkpfunction}. We mention here some of the most striking consequences.

In the homogeneous case \(\lambda = \frac{(k + \sigma) p - \ell q}{q - p}\), we obtain the fractional counterpart of \eqref{equationHomogeneousMorreyInterpolation}: if 
\begin{equation}
\label{equationHomogeneousMorreyInterpolationFractional}
  \int_{\R^N} \abs{D^\ell u}^q \le C \bigl(\abs{u}_{\mathcal{L}^{1, ((k + \sigma) p - \ell q)/(q - p)}_{\ell, \rho} (\R^N)}\bigr)^{q - p} \int_{\R^N} \frac{\abs{D^k u (x) - D^k u (y)}^p}{\abs{x - y}^{N + sp}} \dif x \dif y.
\end{equation}
In particular, if \(p (k + \sigma) < N\), then 
\begin{equation}
\label{equationHomogeneousMorreyInterpolationFractionalCritical}
  \int_{\R^N} \abs{u}^\frac{N p}{N - (k + \sigma) p} \le C \bigl(\abs{u}_{\mathcal{L}^{1, N/p - (k + \sigma)}_{\ell, \rho} (\R^N)}\bigr)^\frac{(k + \sigma)p^2}{N - (k + \sigma)p} \int_{\R^N} \frac{\abs{D^k u (x) - D^k u (y)}^p}{\abs{x - y}^{N + sp}} \dif x \dif y.
\end{equation}
The estimate \eqref{equationHomogeneousMorreyInterpolationFractional} was known for \(p = 2\) \cite{PatalucciPisante}*{theorem 1.1}.

We also have the interpolation inequality for \(\ell \ge 1\),
\begin{equation}
  \int_{\R^N} \abs{D^\ell u}^\frac{p (k + \sigma)}{\ell} \le C \bigl(\abs{u}_{\mathrm{BMO} (\R^N)}\bigr)^{(\frac{k + \sigma}{\ell} - 1) p} \int_{\R^N} \frac{\abs{D^k u (x) - D^k u (y)}^p}{\abs{x - y}^{N + sp}} \dif x \dif y;
\end{equation}
this inequality is a consequence of interpolation inequalities between Besov spaces \cite{MachiharaOzawa2003}.

\begin{lemma}
\label{lemmaDkFract}
There exists a constant \(C > 0\) such that for every \(u \in W^{1, k}_\mathrm{loc} (\R^N)\), for every \(x \in \R^N\) and every \(R > 0\),
\[
  \abs{D^\ell u (x)} \le C \Bigl(\int_{B_R (x)} \frac{\abs{D^k u (y)- D^k u (x)}}{\abs{x - y}^{N - k}} \dif y + \fint_{B_R (x)} \abs{u}\Bigr) .
\]
\end{lemma}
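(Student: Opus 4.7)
The plan is to follow the proof of lemma~\ref{lemmaDk} with a single but crucial modification: the mollifier \(\eta\) should be chosen with the additional vanishing-moment condition
\[
\int_{B_1} z^{\otimes(k-\ell)}\, \eta(z) \dif z = 0.
\]
Such an \(\eta \in C^\infty_c(B_1)\) with \(\int_{B_1}\eta = 1\) can be produced by perturbing a standard nonnegative bump by a finite linear combination of high-order partial derivatives of another compactly supported bump, which cancels each offending top-order moment one at a time without disturbing the normalisation \(\int \eta = 1\).

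With this \(\eta\) in hand, I would define \(g(r) = \int_{B_1} D^\ell u(x+rz)[w_1,\dotsc,w_\ell]\,\eta(z)\dif z\) and compute its derivatives \(g^{(j)}\) exactly as in lemma~\ref{lemmaDk}. The lower-order derivatives \(g^{(j)}(R)\) for \(0 \le j \le k-\ell-1\) are converted by integration by parts into integrals of \(u\) against the smooth kernels \(\eta^j_R\) introduced in the proof of lemma~\ref{lemmaDk}, and these produce the \(\fint_{B_R(x)}\abs{u}\) contribution. The Taylor remainder involves \(g^{(k-\ell)}\); decomposing \(D^k u(x+rz) = D^k u(x) + (D^k u(x+rz) - D^k u(x))\) splits this remainder into two pieces. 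The \(D^k u(x)\) piece factors out as
\[
D^k u(x)[w_1,\dotsc,w_\ell,\cdot,\dotsc,\cdot]\cdot \int_{B_1} z^{\otimes(k-\ell)}\,\eta(z)\dif z,
\]
which vanishes by the moment condition on \(\eta\). The surviving piece, after the same Fubini manipulation as in lemma~\ref{lemmaDk}, produces precisely the Riesz-type integral \(\int_{B_R(x)} \abs{D^k u(y) - D^k u(x)}/\abs{x-y}^{N-k}\dif y\).

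The main obstacle is the construction of the vanishing-moment mollifier \(\eta\); once that is in place, the rest is a routine adaptation of the bookkeeping already carried out in lemma~\ref{lemmaDk}. A tempting alternative is to apply lemma~\ref{lemmaDk} directly to \(v(y) = u(y) - \frac{1}{k!}D^k u(x)[(y-x)^{\otimes k}]\), exploiting \(D^\ell v(x) = D^\ell u(x)\) for \(\ell < k\) together with \(D^k v(y) = D^k u(y) - D^k u(x)\); this delivers the Riesz-type integral directly, but it also produces a spurious term of order \(R^k\abs{D^k u(x)}\) from the average of the polynomial, and this term is not obviously controlled by the desired right-hand side. For that reason the direct Taylor-expansion approach with a tailored \(\eta\) appears preferable, and it also explains why, unlike in lemma~\ref{lemmaDk}, the argument is robust down to \(p = 1\), as required by theorem~\ref{theoremWspfunction}.
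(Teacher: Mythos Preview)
Your approach is correct but differs from the paper's. The paper keeps the \emph{same} mollifier \(\eta\) as in lemma~\ref{lemmaDk} and instead modifies the Taylor expansion: it extends the polynomial sum by one order to \(\sum_{j=0}^{k-\ell}\) and writes the remainder as \(\int_0^R \bigl(g^{(k-\ell)}(r)-g^{(k-\ell)}(R)\bigr)(-r)^{k-\ell-1}\dif r\) (up to constants), then inserts \(\pm D^k u(x)\) into each of the two terms so that only differences \(D^k u(\,\cdot\,)-D^k u(x)\) survive. The extra boundary term \(g^{(k-\ell)}(R)(-R)^{k-\ell}/(k-\ell)!\) is handled by the same integration-by-parts trick as the lower-order terms and feeds into \(\fint_{B_R(x)}\abs{u}\). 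Your route keeps the original Taylor remainder and instead buys the cancellation of the constant piece \(D^k u(x)\) through the moment condition on \(\eta\). Both devices achieve exactly the same goal; the paper's is marginally more economical in that it requires no special construction of \(\eta\), while yours has the appeal of literally reusing the computation of lemma~\ref{lemmaDk} once the tailored \(\eta\) is in hand. Your discarded alternative (applying lemma~\ref{lemmaDk} to \(u\) minus its \(k\)-th order Taylor polynomial) is indeed inadequate for the reason you give.

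One small correction to your closing remark: the robustness down to \(p=1\) in theorem~\ref{theoremWspfunction} is not a feature of the mollifier choice or of lemma~\ref{lemmaDkFract} itself (both lemmas are pointwise and \(p\)-free). It comes from the fact that in the fractional proof the Riesz integral is bounded directly by \(R^{k+\sigma}D_{\sigma,p}(D^k u)(x)\) via H\"older's inequality, whereas in the integer-order proof one passes through the maximal function and hence needs \(p>1\).
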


This inequality implies by H\"older's inequality the fractional interpolation estimate \citelist{\cite{MazyaShaposhnikova1999}*{(32)}\cite{MazyaShaposhnikova2002}*{appendix}}
\[
  \abs{D^\ell u (x)} \le C \bigl(\mathcal{M} \abs{u} (x)\bigr)^{1 - \frac{\ell}{k + \sigma}} \Bigl(\int_{\R^N} \frac{\abs{D^k u (y)- D^k u (x)}^p}{\abs{x - y}^{N + \sigma p}}\Bigr)^\frac{\ell}{p (k + \sigma)};
\]
the inequality of the lemma appears in fact in the proof of the latter inequality \cite{MazyaShaposhnikova1999}*{(32)}. We give a proof of the lemma for the sake of completeness.

\begin{proof}[Proof of lemma~\ref{lemmaDkFract}]
The proof begins as the proof of lemma~\ref{lemmaDk}. Instead of \eqref{eqTaylor}, we write
\[
  D^\ell u (x)[w_1, \dotsc, w_\ell] = \lim_{r \to 0} g (r) = \sum_{j = 0}^{k -\ell} \frac{g^{(j)} (R)\, (-R)^j}{j !}
  - \int_0^R \frac{g^{(k)} (r) - g^{(k)} (R) }{(k - 1) !}(-r)^{k - 1}\dif r.\\
\]
We first have as previously
\[
  \sum_{j = 0}^{k -\ell} \frac{g^{(j)} (R)\, (-R)^j}{j !} = \sum_{j = 0}^{k - \ell} \frac{1}{R^\ell} \int_{B_R (x)} u (y) \eta_r^{(j)} (y - x)[w_1, \dotsc, w_k] \dif y.  
\]
Next, we have 
\begin{multline*}
  \int_0^R \frac{1}{r^k} \frac{g^{(k)} (r) - g^{(k)} (R) }{(k - 1) !}(-r)^{k - 1}\dif r\\
  \shoveleft{=-\int_0^R  \int_{B_1} D^k u (x + r z)[w_1, \dotsc, w_k, z, \dotsc, z]} - D^k u (x)[w_1, \dotsc, w_k, z, \dotsc, z]\\
  + D^k u (x)[w_1, \dotsc, w_k, z, \dotsc, z] - D^k u (x + R z)[w_1, \dotsc, w_k, z, \dotsc, z] )\\ \eta_r (y - x) \dif y \frac{(-r)^{k - \ell - 1}}{(k - \ell - 1) !}\dif r.
\end{multline*}
and we conclude by changes of variable and Fubini's theorem.
\end{proof}

\begin{proof}[Proof of theorem~\ref{theoremWspfunction}]
For almost every \(x \in \R^N\), for every \(R > 0\) and every \(P \in \mathcal{P}_{\ell - 1} (\R^N)\), we bound by the pointwise estimate  (lemma~\ref{lemmaDk}), since \(D^\ell P = 0\),
\[
 R^\ell \abs{D^\ell u (x)} \le C \Bigl(\int_{B_R (0)} \frac{\abs{D^k u (y) - D^k u (x)}}{\abs{x - y}^{N - k}} \dif y + \fint_{B_R (x)} \abs{u - P}\Bigr).
\]
We fix \(\beta > 0\). By H\"older's inequality and by definition of the Campanato norm, if \(R \le \rho\),
\begin{equation}
\label{eqFirstBoundFractional}
\rho^\ell \abs{D^\ell u (x)} \le C \rho^{\ell + \beta} \bigl( R^{k + \sigma -\ell-\beta} D_{\sigma, p} (D^k u) (x) + R^{- \lambda -\ell - \beta} \abs{u}_{\mathcal{L}^{1, \lambda}_{\ell, \rho} (\R^N)}\bigr),
\end{equation}
where we use the notation
\[
  D_{\sigma, p} (D^k u) (x) = \Bigl(\int_{\R^N} \frac{\abs{D^k u (y) - D^k u (x)}}{\abs{x - y}^{N + \sigma p}} \Bigr)^\frac{1}{p}.
\]
If \(\abs{u}_{\mathcal{L}^{1, \lambda}_{\ell, \rho} (\R^N)}\le D_{\sigma, p} (D^k u) (x) \rho^{k + \sigma + \lambda}\), we take
\[
 R = \biggl(\frac{\abs{u}_{\mathcal{L}^{1, \lambda}_{\ell, \rho} (\R^N)}}{D_{\sigma, p} (D^k u) (x)} \biggr)^\frac{1}{k + \sigma + \lambda}.
\]
and we obtain
\begin{equation}\label{eqCase1Fract}
\begin{split}
 \rho^\ell\abs{D^\ell u (x)} &\le C \rho^{\ell + \beta} \bigl(D_{\sigma, p} (D^k u) (x)\bigr)^\frac{\lambda + \ell + \beta}{\lambda + k + \sigma} \abs{u}_{\mathcal{L}^{1, \lambda}_{\ell, \rho} (\R^N)}^\frac{k - \ell - \beta}{k + \sigma + \lambda}\\
 &\le C \bigl(\rho^{k + \sigma} D_{\sigma, p} (D^k u) (x)\bigr)^\frac{\lambda + \ell + \beta}{\lambda + k + \sigma} \bigl(\rho^{-\lambda} \abs{u}_{\mathcal{L}^{1, \lambda}_{\ell, \rho} (\R^N)}\bigr)^\frac{k + \sigma - \ell - \beta}{k + \sigma + \lambda}.
\end{split}
\end{equation}
Otherwise, we observe that by \eqref{eqFirstBoundFractional} with \(R = \rho\),
\[
 \rho^\ell \abs{D^\ell u (x)} 
 \le C \rho^{-\lambda} \abs{u}_{\mathcal{L}^{1, \lambda}_{\ell, \rho} (\R^N)}
\]
and thus if \(- \lambda -\ell \le \beta \le k + \sigma - \ell\)
\begin{equation}
\label{eqCase2Fract}
\rho^\ell \abs{D^\ell u (x)} \le C \bigl(\rho^\ell \abs{D^\ell u (x)}\bigr)^\frac{\lambda + \ell + \beta}{\lambda + k + \sigma} \bigl(\rho^{-\lambda} \abs{u}_{\mathcal{L}^{1, \lambda}_{\ell, \rho} (\R^N)}\bigr)^\frac{k + \sigma - \ell -\beta}{\lambda + k + \sigma}.
\end{equation}
Hence, we have in both cases, in view of \eqref{eqCase1Fract} and \eqref{eqCase2Fract},
\begin{equation}
\label{equationInterpolationLocal}
 \rho^\ell \abs{D^\ell u (x)} \le C \bigl(\rho^{k + \sigma} D_{\sigma, p} (D^k u) (x) + \rho^\ell \abs{D^\ell u (x)}\bigr)^\frac{\lambda + \ell + \beta}{\lambda + k + \sigma} \bigl(\rho^{-\lambda} \abs{u}_{\mathcal{L}^{1, \lambda}_{\ell, \rho} (\R^N)}\bigr)^\frac{k + \sigma - \ell -\beta}{\lambda + k + \sigma}.
\end{equation}
We take \(\beta = \frac{p (k + \sigma + \lambda)}{q} - \lambda - \ell\) and we conclude with 
\[
 \rho^{q \ell} \int_{\R^N} \abs{D^\ell u}^q \le C \bigl(\rho^{-\lambda} \abs{u}_{\mathcal{L}^{1, \lambda}_{\ell, \rho} (\R^N)}\bigr)^{q - p} \int_{\R^N} \bigl(\rho^{k + \sigma} D_{\sigma, p} (D_k u) (x) + \rho^{\ell} \abs{D^\ell u}\bigr)^p. \qedhere 
\]
\end{proof}

The above proof allows to recover in particular the unpublished elementary proof of fractional Sobolev embeddings of H.\thinspace{}Brezis.

\begin{bibdiv}
\begin{biblist}

\bib{Alvino1977}{article}{
   author={Alvino, Angelo},
   title={Sulla diseguaglianza di Sobolev in spazi di Lorentz},
   journal={Boll. Un. Mat. Ital. A (5)},
   volume={14},
   date={1977},
   number={1},
   pages={148--156},
}

\bib{Campanato1964}{article}{
   author={Campanato, S.},
   title={Propriet\`a di una famiglia di spazi funzionali},
   journal={Ann. Scuola Norm. Sup. Pisa (3)},
   volume={18},
   date={1964},
   pages={137--160},
}

\bib{CanaleDiGironimoVitolo1998}{article}{
   author={Canale, Anna},
   author={Di Gironimo, Patrizia},
   author={Vitolo, Antonio},
   title={Functions with derivatives in spaces of Morrey type and elliptic
   equations in unbounded domains},
   journal={Studia Math.},
   volume={128},
   date={1998},
   number={3},
   pages={199--218},
   issn={0039-3223},
}
\bib{CasoDAmbrosioMonsurro2010}{article}{
   author={Caso, Loredana},
   author={D'Ambrosio, Roberta},
   author={Monsurr{\`o}, Sara},
   title={Some remarks on spaces of Morrey type},
   journal={Abstr. Appl. Anal.},
   volume={\vspace{0pt}},
   date={2010},
   pages={Art. ID 242079, 22},
   issn={1085-3375},
}

\bib{CohenDahmenDaubechiesDeVore2003}{article}{
   author={Cohen, Albert},
   author={Dahmen, Wolfgang},
   author={Daubechies, Ingrid},
   author={DeVore, Ronald},
   title={Harmonic analysis of the space BV},
   journal={Rev. Mat. Iberoamericana},
   volume={19},
   date={2003},
   number={1},
   pages={235--263},
   issn={0213-2230},
}

\bib{CohenDeVorePetrushevXu1999}{article}{
   author={Cohen, Albert},
   author={DeVore, Ronald},
   author={Petrushev, Pencho},
   author={Xu, Hong},
   title={Nonlinear approximation and the space $\mathrm{BV}(\R^2)$},
   journal={Amer. J. Math.},
   volume={121},
   date={1999},
   number={3},
   pages={587--628},
   issn={0002-9327},
}

\bib{CohenMeyerOru1998}{article}{
   author={Cohen, A.},
   author={Meyer, Y.},
   author={Oru, F.},
   title={Improved Sobolev embedding theorem},
   conference={
      title={S\'eminaire sur les \'Equations aux D\'eriv\'ees Partielles,
      1997--1998},
   },
   book={
      publisher={\'Ecole Polytech.},
      place={Palaiseau},
   },
   date={1998},
   note={XVI},
}

\bib{Gagliardo1958}{article}{
   author={Gagliardo, Emilio},
   title={Propriet\`a di alcune classi di funzioni in pi\`u variabili},
   journal={Ricerche Mat.},
   volume={7},
   date={1958},
   pages={102--137},
   issn={0035-5038},
}

\bib{Gerard1998}{article}{
   author={G{\'e}rard, Patrick},
   title={Description du d\'efaut de compacit\'e de l'injection de Sobolev},
   journal={ESAIM Control Optim. Calc. Var.},
   volume={3},
   date={1998},
   pages={213--233},
   issn={1292-8119},
}

\bib{JansonTaiblesonWeiss1983}{article}{
   author={Janson, Svante},
   author={Taibleson, Mitchell},
   author={Weiss, Guido},
   title={Elementary characterizations of the Morrey-Campanato spaces},
   conference={
      title={Harmonic analysis},
      address={Cortona},
      date={1982},
   },
   book={
      series={Lecture Notes in Math.},
      volume={992},
      publisher={Springer},
      place={Berlin},
   },
   date={1983},
   pages={101--114},
}

\bib{KozonoWadade2008}{article}{
   author={Kozono, Hideo},
   author={Wadade, Hidemitsu},
   title={Remarks on Gagliardo--Nirenberg type inequality with critical
   Sobolev space and BMO},
   journal={Math. Z.},
   volume={259},
   date={2008},
   number={4},
   pages={935--950},
   issn={0025-5874},
   doi={10.1007/s00209-007-0258-5},
}

\bib{Ledoux2003}{article}{
   author={Ledoux, M.},
   title={On improved Sobolev embedding theorems},
   journal={Math. Res. Lett.},
   volume={10},
   date={2003},
   number={5-6},
   pages={659--669},
   issn={1073-2780},
}

\bib{Lions1984CC2}{article}{
   author={Lions, P.-L.},
   title={The concentration-compactness principle in the calculus of
   variations. The locally compact case. II},
   journal={Ann. Inst. H. Poincar\'e Anal. Non Lin\'eaire},
   volume={1},
   date={1984},
   number={4},
   pages={223--283},
   issn={0294-1449},
}

\bib{Lions1985CC1}{article}{
   author={Lions, P.-L.},
   title={The concentration-compactness principle in the calculus of
   variations. The limit case},
   part={I},
   journal={Rev. Mat. Iberoamericana},
   volume={1},
   date={1985},
   number={1},
   pages={145--201},
   issn={0213-2230},
   doi={10.4171/RMI/6},
}

\bib{MachiharaOzawa2003}{article}{
   author={Machihara, Shuji},
   author={Ozawa, Tohru},
   title={Interpolation inequalities in Besov spaces},
   journal={Proc. Amer. Math. Soc.},
   volume={131},
   date={2003},
   number={5},
   pages={1553--1556},
   issn={0002-9939},
}

\bib{Mazya2010}{book}{
   author={Maz'ya, Vladimir},
   title={Sobolev spaces with applications to elliptic partial differential
   equations},
   series={Grundlehren der Mathematischen Wissenschaften 
},
   volume={342},
   edition={2},
   publisher={Springer},
   place={Heidelberg},
   date={2011},
   pages={xxviii+866},
   isbn={978-3-642-15563-5},
}

\bib{MazyaShaposhnikova1999}{article}{
   author={Maz{\cprime}ya, Vladimir},
   author={Shaposhnikova, Tatyana},
   title={On pointwise interpolation inequalities for derivatives},
   journal={Math. Bohem.},
   volume={124},
   date={1999},
   number={2-3},
   pages={131--148},
   issn={0862-7959},
}

\bib{MazyaShaposhnikova2002}{article}{
   author={Maz{\cprime}ya, V.},
   author={Shaposhnikova, T.},
   title={On the Bourgain, Brezis, and Mironescu theorem concerning limiting
   embeddings of fractional Sobolev spaces},
   journal={J. Funct. Anal.},
   volume={195},
   date={2002},
   number={2},
   pages={230--238},
   issn={0022-1236},
}

\bib{MeyerRiviere2003}{article}{
   author={Meyer, Yves},
   author={Rivi{\`e}re, Tristan},
   title={A partial regularity result for a class of stationary Yang-Mills
   fields in high dimension},
   journal={Rev. Mat. Iberoamericana},
   volume={19},
   date={2003},
   number={1},
   pages={195--219},
   issn={0213-2230},
}

\bib{MorozVanSchaftingen2013Ground}{article}{
   author={Moroz, Vitaly},
   author={Van Schaftingen, Jean},
   title={Groundstates of nonlinear Choquard equations: Existence,
   qualitative properties and decay asymptotics},
   journal={J. Funct. Anal.},
   volume={265},
   date={2013},
   number={2},
   pages={153--184},
   issn={0022-1236},
}

\bib{Nirenberg1959}{article}{
      author={Nirenberg, L.},
       title={On elliptic partial differential equations},
        date={1959},
     journal={Ann. Scuola Norm. Sup. Pisa (3)},
      volume={13},
       pages={115\ndash 162},
}

\bib{ONeil1963}{article}{
   author={O'Neil, Richard},
   title={Convolution operators and $L(p,\,q)$ spaces},
   journal={Duke Math. J.},
   volume={30},
   date={1963},
   pages={129--142},
   issn={0012-7094},
}

\bib{PatalucciPisante}{article}{
  author = {Patalucci, Giampierro},
  author = {Pisante, Adriano},
  title = {Improved Sobolev embeddings, profile decomposition, and concentration-compactness
for fractional sobolev spaces},
  eprint={arXiv:1302.5923},
}
  
\bib{Peetre1966}{article}{
   author={Peetre, Jaak},
   title={Espaces d'interpolation et th\'eor\`eme de Soboleff},
   journal={Ann. Inst. Fourier (Grenoble)},
   volume={16},
   date={1966},
   number={1},
   pages={279--317},
   issn={0373-0956},
}

\bib{Peetre1969}{article}{
   author={Peetre, Jaak},
   title={On the theory of \(\mathcal{L}_{p, \lambda }\) spaces},
   journal={J. Functional Analysis},
   volume={4},
   date={1969},
   pages={71--87},
}

\bib{RafeiroSamkoSamko2013}{article}{
   author={Rafeiro, Humberto}, 
   author={Samko, Natasha},
   author={Samko, Stefan},
   title={Morrey-Campanato Spaces: an Overview},
   book={
      editor={Karlovich, Yuri I.}, 
      editor={Rodino, Luigi},
      editor={Silbermann, Bernd},
      editor={Spitkovsky,Ilya M.},
      title={Operator Theory, Pseudo-Differential Equations, and Mathematical Physics},
      subtitle={The Vladimir Rabinovich Anniversary Volume},
      publisher={Springer},
      series={Operator Theory: Advances and Applications},
      volume={228}, 
      place={Basel},
   },
   date={2013}, 
   pages={293--323},
}

\bib{Stein1970}{book}{
   author={Stein, Elias M.},
   title={Singular integrals and differentiability properties of functions},
   series={Princeton Mathematical Series},
   volume = {no. 30},
   publisher={Princeton University Press},
   place={Princeton, N.J.},
   date={1970},
   pages={xiv+290},
}

\bib{Strzelecki2006}{article}{
   author={Strzelecki, P.},
   title={Gagliardo--Nirenberg inequalities with a BMO term},
   journal={Bull. London Math. Soc.},
   volume={38},
   date={2006},
   number={2},
   pages={294--300},
   issn={0024-6093},
}

\bib{Tartar1998}{article}{
   author={Tartar, Luc},
   title={Imbedding theorems of Sobolev spaces into Lorentz spaces},
   journal={Boll. Unione Mat. Ital. Sez. B Artic. Ric. Mat. (8)},
   volume={1},
   date={1998},
   number={3},
   pages={479--500},
   issn={0392-4041},
}

\bib{TransiricoTroisiVitolo1995}{article}{
   author={Transirico, M.},
   author={Troisi, M.},
   author={Vitolo, A.},
   title={Spaces of Morrey type and elliptic equations in divergence form on
   unbounded domains},
   journal={Boll. Un. Mat. Ital. B (7)},
   volume={9},
   date={1995},
   number={1},
   pages={153--174},
}

\bib{Triebel1978}{book}{
   author={Triebel, Hans},
   title={Interpolation theory, function spaces, differential operators},
   series={North-Holland Mathematical Library},
   volume={18},
   publisher={North-Holland},
   place={Amsterdam},
   date={1978},
   pages={528},
   isbn={0-7204-0710-9},
}

\bib{Willem1996}{book}{
    author = {Willem, Michel},
     title = {Minimax theorems},
    series = {Progress in Nonlinear Differential Equations and their Applications, 24},
 publisher = {Birkh\"auser Boston Inc.},
   address = {Boston, Mass.},
      year = {1996},
     pages = {x+162},
      isbn = {0-8176-3913-6}
}

\bib{YuanSickelYang2010}{book}{
   author={Yuan, Wen},
   author={Sickel, Winfried},
   author={Yang, Dachun},
   title={Morrey and Campanato meet Besov, Lizorkin and Triebel},
   series={Lecture Notes in Mathematics},
   volume={2005},
   publisher={Springer},
   place={Berlin},
   date={2010},
   pages={xii+281},
   isbn={978-3-642-14605-3},
}

\end{biblist}
\end{bibdiv}

\end{document}